\documentclass[11pt]{article}
\usepackage{amssymb, latexsym, mathtools, amsthm, amsmath}
\usepackage{amsfonts}
\usepackage{stmaryrd}
\usepackage{authblk}
\pagestyle{plain}

\renewenvironment{abstract}
 { \normalsize
  \list{}{\setlength{\leftmargin}{.0cm}%
    \setlength{\rightmargin}{\leftmargin}}%
  \item {\bf \abstractname.}\relax}
 {\endlist}
\usepackage{xcolor,colortbl}
\usepackage{enumerate}
\usepackage{pdfsync}
\usepackage[numbers]{natbib}
\usepackage{parskip}
\theoremstyle{plain}

\newenvironment{remark}{{\bf Remark}.}{\hfill$\blacktriangleleft$}
\newtheorem{thm}{Theorem}[section]

\newtheorem{lem}[thm]{Lemma}

\newtheorem{coro}[thm]{Corollary}

\theoremstyle{definition}
\newtheorem{defi}[thm]{Definition}
\newcommand{\twolo}{2^{<\omega}}

 %empty string

\newcommand{\Nat}{\mathbb{N}}

\newcommand{\ds}{\textup{\textsf{d}}}

\newcommand{\restr}{\upharpoonright}  %restriction

\newcommand{\un}{\uparrow} %undefined
\newcommand{\de}{\downarrow} %defined

\DeclarePairedDelimiter{\tuple}{\langle}{\rangle}

\newcommand{\sqbrad}[2]{\{\hspace{0.03cm}{#1} : {#2}\hspace{0.03cm}\}}

\DeclarePairedDelimiter{\dbra}{\llbracket}{\rrbracket}

\newcommand{\DNC}{\textsf{\textup{DNC}}}

\newcommand{\CC}{\mathcal{C}}

 \usepackage{tcolorbox}

\newcommand{\abs}[1]{|{#1}|}

\newcommand{\parb}[1]{\big({#1}\big)}
\newcommand{\parB}[1]{\Big(\hspace{0.04cm}{#1}\hspace{0.04cm}\Big)}

\newcommand{\dom}{\mathsf{dom}}

%Symbol Definitions for Randomness
\newcommand{\ml}{Martin-L\"{o}f }

\newcommand{\pz}{\Pi^0_1}

\newcommand{\pzt}{\Pi^0_2}

\newcommand{\szth}{\Sigma^0_3}
\newcommand{\eg}{e.g.\ }

\newcommand{\ce}{c.e.\ }

\newcommand{\pf}{prefix-free }

\newcommand{\aed}{a.e.\ dominating\ }

\newcommand{\twome}{2^{\omega}}

\newcommand{\zj}{\emptyset'}
\newcommand{\twomel}{2^{<\omega}}

\newcommand{\wedga}{\ \wedge\ }

\newcommand{\leqT}{\leq_T}
\newcommand{\geqT}{\geq_T}
\newcommand{\equivT}{\equiv_T}

\newcommand{\impl}{\implies}

\newcommand{\hthree}{\hspace{0.3cm}}

\newcommand{\Ss}{\mathsf{S}}

\newcommand{\inv}{^{-1}}

\newcommand{\leqlr}{\leq_{LR}}
\newcommand{\geqlr}{\geq_{LR}}

\title{Collision-resistant hash-shuffles on the reals\thanks{Supported by Beijing Natural Science Foundation (IS24013).}}
\author{George Barmpalias} \author{Xiaoyan Zhang\thanks{Authors are in alphabetical order. We thank L.~Levin for several suggestions.} } 
\affil{State Key Lab of Computer Science, Institute of Software\\ Chinese Academy of Sciences, Beijing, China}

\begin{document}
\maketitle
\begin{abstract}
Oneway real functions  are 
effective maps on positive-measure sets of reals that 
preserve randomness and have no effective probabilistic inversions. 
We construct a oneway real function 
which is {\em collision-resistant}: the probability of effectively producing 
distinct reals with the same image is zero, and each real has uncountable inverse image.
\end{abstract}
\section{Introduction}
Oneway functions underly much of the theory of computational complexity  \cite{Levin2003}: 
they are finite maps that are
computationally easy to compute but  hard to invert, even probabilistically.
Modern cryptographic primitives rely on their existence, an unproven hypothesis which
remains  a long-standing open problem.
Non-injective  oneway functions  play a special role in
public-key cryptography, especially when  they are {\em collision-resistant}: no algorithm can generate {\em siblings} (inputs with the same output) with positive probability in a resource-bounded setting \cite{manytooneCrypto}.

Levin \cite{LevinEmailDec23} extended this concept  to computable functions on the {\em reals} (infinite binary sequences)
in the  framework of computability theory and algorithmic randomness \cite{rodenisbook, MR1438307}.
They are partial computable real functions that preserve randomness in the sense of \citet{MR0223179}
and  no  probabilistic algorithm inverts them with positive probability.

A total computable oneway surjection $f$ was constructed in \cite{onewayg24} via 
a partial permutation of the bits of the input based on an effective enumeration of 
the halting problem $\zj$. Independently \citet{Gacs24oneway} constructed a partial computable  function 
which is probabilistically hard to invert in a different setting, where probability is  over the domain 
rather than the range.

Given the importance of collision-resistant oneway functions in computational complexity
\citet{LevinEmailJul24} asked whether collision-resistant computable oneway real functions
exist. Although the oneway function in \cite{onewayg24} 
is  strongly nowhere injective (all inverse images are uncountable)
but not {\em collision-resistant}: a Turing machine can produce {\em $f$-siblings}
(reals $x\neq z$ with $f(x)=f(z)$) given any sufficiently algorithmically random oracle. 
So $f$-siblings can be effectively produced by a probabilistic  machine with positive probability.

Our goal  is to establish 
the existence of a total computable nowhere injective collision-resistant oneway function.
The key idea is apply a {\em hash} to the partial permutation ({\em shuffle}) used in
the original oneway function \cite{onewayg24} with a boolean function $h$ 
and show that  (under mild assumptions on $h$) these {\em hash-shuffles}
are also oneway and (strongly) nowhere injective. We then define a specific $h$ based 
on the universal  partial computable predicate and show that corresponding 
hash-shuffle is collision-resistant.

Given that oneway permutations \cite{onewayPermImp, onewaypermRothe}
are also significant in computational complexity, it is interesting to know whether
injective oneway  maps on the reals exist.
This is not known but by \cite[Corollary 3.2]{onewayg24} they cannot be total computable.
Assuming random-preservation we show that inverting partial computable  injections
is in general easier than inverting total computable many-to-one maps on the reals.

{\bf Outline.}
Oneway functions and collision-resistance are defined in
\S\ref{4okhjPbTWo},
where we also show that the {\em shuffles} of \cite{onewayg24} are not collision-resistant.

Hash-maps and their corresponding 
 {\em hash-shuffles} are defined in
\S\ref{opCuMKHY9j} and shown to be oneway under mild assumptions on their hash-map.
 This analysis also shows how to obtain oneway functions of different strengths, in terms of the Turing degrees of the
 oracles that can probabilistically invert them.

A collision-resistant oneway function is obtained in \S\ref{tEVOhEWHi7} 
by specifying an appropriate hash function based on a universal Turing machine.

We conclude in \S\ref{952rgaHVWk} by establishing an upper bound on the hardness of 
partial computable oneway injections which is lower than the worse-case for total computable oneway
many-to-one maps on the reals.

{\bf Notation.}
Let $\Nat$ be the set of natural numbers, represented by $n,m, i,j,t,s$.
Let $2^\omega$ be the set of  reals, represented by variables $x,y,z,v,w$, and $2^{<\omega}$ the set of  strings which we represent by $\sigma,\tau, \rho$. We index the bits $x(i)$ of $x$ starting from $i=0$. The prefix of $x$ of length $n$  is $x(0)x(1)\cdots x(n-1)$
and is  denoted by $x\restr_n$. Let
$\preceq$, $\prec$ denote the prefix and strict prefix relation between two strings or a string and a real. 
Similarly $\succeq, \succ$ denote the suffix relations.
Let $x\oplus y$ denote the real $z$ with $z(2n)=x(n)$ and $z(2n+1)=y(n)$.

The \textit{Cantor space} is $2^\omega$  with the topology generated by the 
basic open sets
\[
\dbra{\sigma}:=\sqbrad{z\in\twome}{\sigma\prec z}
\hspace{0.3cm}\textrm{for $\sigma\in 2^{<\omega}$.}
\]
Let $\mu$ be the  \textit{uniform measure} on $\twome$, determined by $\mu(\dbra{\sigma})=2^{-|\sigma|}$.  
Probability in $\twome\times\twome$ is reduced
to $\twome$ via  $(x,y)\mapsto x\oplus y$. 
A subset of $\twome$ is {\em positive} if it has positive $\mu$-measure and {\em null} otherwise. Let
\begin{itemize}
\item $\de, \un$ denote that the preceding expression is defined or undefined
\item $f:\subseteq\twome\to\twome$ denote that $f$ is a function from a subset of $\twome$ to $\twome$
\item $\dom(f)$ be the {\em domain} of $f$: the set of $x\in\twome$ where $f(x)$ is defined.
\end{itemize}
Turing reducibility $x\leqT z$ means that $x$ is computable from $z$ (is {\em $z$-computable}). 
Effectively open sets or $\Sigma^0_1$ classes are of the form $\bigcup_i\dbra{\sigma_i}$ where $(\sigma_i)$ is computable. 
A family $(V_n)$ is called uniformly $\Sigma^0_1$ if
\[
\textrm{$V_n=\bigcup_i\dbra{\sigma_{n,i}}$ \ \ where $(\sigma_{n,i})$ is computable.}
\]
A \textit{Martin-L\"of test} is a uniformly $\Sigma^0_1$ sequence $(V_n)$ such that $\mu(V_n)\leq 2^{-n}$. A real $x$ is \textit{random} if 
$x\notin\bigcap_n V_n$ for any Martin-L\"of test $(V_n)$. 
Relativization to oracle $r$ defines $\Sigma^0_1(r)$ classes and $r$-random reals.

\section{Oneway functions and collisions}\label{4okhjPbTWo}
Oneway functions where introduced in \cite{DHellmanow, purdyoneway}.
Levin \cite{LevinEmailDec23} adapted this notion to effective maps on the reals.
Let $f, g:\subseteq\twome\to\twome$. 

We say that  $g$ is a {\em probabilistic inversion} of $f$ if 
\[
\mu(\sqbrad{y\oplus r}{f(g(y\oplus r))=y})>0
\]
and say that $f$ is {\em random-preserving} if $\mu(\dom(f))>0$ and 
\[
\textrm{$f(x)$ is random for each random $x\in\dom(f)$.}
\]
%We say that $f$ is {\em strongly nowhere injective} if $f\inv(y)$ is uncountable for each $y\in f(\twome)$.
These are the ingredients of  Levin's definition of  oneway real functions.
\begin{defi}[Levin]\label{jpr3eHEJ4levin}
We say that  $f\subseteq: \twome\to\twome$  is {\em oneway} if it
\begin{itemize}
\item  is partial computable and  random-preserving
\item  has no partial computable probabilistic inversion.
\end{itemize}
If $f$ has no probabilistic inversion $g\leqT w$ it is {\em oneway relative to $w$.}
\end{defi}
\begin{remark}
Oneway functions can be defined with `randomness-preserving' replaced with the 
weaker condition that with positive probability $f$ maps to random reals.
It is not hard to show that the two formulations are essentially equivalent, up to effective restrictions \cite[Lemma 3.5]{owrand24}.
\end{remark}

Let $(a_i)$ be an effective enumeration of $\zj$ without repetitions. 

By \cite[Theorem 4.4]{onewayg24} the total computable function 
\begin{equation}\label{HEBpofpqIB}
f:\twome\to\twome
\hspace{0.3cm}\textrm{given by}\hspace{0.3cm}
f(x)(i):=x(a_i)
\end{equation}
is a oneway surjection. 
By \cite[Theorem 4.9]{onewayg24} 
\[
\textrm{$f\inv(y)$ is uncountable for each $y\in f(\twome)$.}
\]
Unfortunately $f$ lacks the desired property of collision-resistance.

The notion of \citet{neglivyuginold}  of {\em negligibility} (also see \cite{bslBienvenuP16}) is handy.
\begin{defi}[V'yugin]
A class $\CC\subseteq\twome$ is {\em negligible}
if the  set of oracles that compute a member of $\CC$ is null.
If the set of oracles $z$ such that $w\oplus z$ computes a member of $\CC$ is null
we say that $\CC$ is {\em $w$-negligible}.
\end{defi}
\citet{LevinEmailJul24} defined collision-resistance for real functions.
\begin{defi}[Levin]
Given $f:\subseteq \twome\to\twome$ the members of 
\[
S_f:=\sqbrad{(x,z)}{x\neq z\wedga f(x)=f(z)}
\]
are called {\em $f$-siblings}. We say that $f$ is {\em collision-resistant} if 
$S_f$ is negligible and {\em collision-resistant relative to $w$}
if $S_f$ is $w$-negligible.
\end{defi}
To see that $f$ of \eqref{HEBpofpqIB} is not collision-resistant note that it is a 
{\em shuffle}:  it outputs a permutation of selected bits of the input.
The selected positions are the members of $\zj$ so if we fix $k\not\in \zj$  and let
\[
\textrm{$x_k$ be the real $z$ with $\forall i\ \parb{z(i)=x(i)\iff i\neq k}$}
\]
then $x\mapsto (x, x_k)$ is computable and each output is an $f$-sibling.

Toward collision-resistance we could use another  \ce set $A$ in place of $\zj$ in its definition
which has {\em thin} infinite complement: the oracles that compute an infinite subset of $\Nat-A$ form a null class.
The existence of such $A$ is well- known (any hypersimple set has this property). 

With this modification $f$ would still fail collision-resistance  but
would satisfy the weaker property that the oracles computing members of 
\[
\sqbrad{(x,z)}{\forall i_0\ \exists i>i_0,\ x(i)\neq z(i) \wedga f(x)=f(z)}
\]
is null. Restricting $f$ to a positive subset of $\twome$ while keeping it partial computable does not
make $f$ collision-resistant. These attempts show that obtaining collision-resistant oneway functions 
requires a new ingredient.

\section{Hash shuffles}\label{opCuMKHY9j}
Toward achieving collision resistance we first extend the shuffle format \eqref{HEBpofpqIB}
for oneway functions $f$ by  effectively adding  some ``noise'' to the output of $f$ by combining it with the output
of another function $h$ which we call a {\em hash}.

\begin{defi}
If $A\subseteq\Nat$ is an infinite \ce set a computable 
\[
h: \sqbrad{\sigma}{|\sigma|\in A}\to \{0,1\}
\]
is called an {\em $A$-hash} or simply a {\em hash}.
\end{defi}
Let $\otimes$ denote the XOR operator between bits.

\begin{defi}
If $h$ is an $A$-hash the  {\em $h$-shuffle} is the 
\[
f:\twome\to\twome
\hspace{0.3cm}\textrm{given by}\hspace{0.3cm}
f(x)(i):=x(a_i)\otimes h(x\restr_{a_i})
\]
where $(a_i)$ is a computable enumeration  of $A$ without repetitions.
\end{defi}
Under mild assumptions on $h$ these  generalized shuffles preserve the properties of 
\eqref{HEBpofpqIB}. An $A$-hash $h$ is {\em trivial} if $A$ is computable  and
\[
\textrm{{\em hash-shuffles} refer to $h$-shuffles for non-trivial $h$. }
\]
We show that hash-shuffles $f$ are oneway and nowhere injective. Our analysis 
differs from \cite{onewayg24} and establishes  additional properties: (a)
there are oneway functions of different strengths according to the choice of the domain $A$  of the hash;
(b) no oracle $w\not\geqT A$ can invert $f$ on any random $y\not\geqT A$.

\subsection{Properties of hash-shuffles}\label{8ti2mZr8ag}
We establish the basic properties of hash-shuffles.
\begin{lem}\label{9IqHVDCOIU}
For each \ce set $A$ and $A$-hash $h$ the $h$-shuffle $f$ is 
\begin{itemize}
\item  total computable,  surjective and random-preserving
\item strongly nowhere injective: $f\inv(y)$ is uncountable for each $y$. 
\end{itemize}
\end{lem}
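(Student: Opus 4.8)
The plan is to verify the listed properties in turn, with measure preservation doing the heavy lifting for random-preservation.

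\emph{Total computability and inversion.} Since $A$ is infinite and $(a_i)$ is a computable enumeration of it without repetitions, $a_i$ is a total computable function of $i$; hence to compute $f(x)(i)$ one locates $a_i$, reads the bit $x(a_i)$, and evaluates $h$ on the finite string $x\restr_{a_i}$ (legal since $a_i\in A$), so $f$ is total computable. For inversion, fix a target $y$ and build a preimage $x$ by deciding its bits in increasing order of position $n$: if $n\notin A$ set $x(n)$ to any chosen value; if $n=a_i\in A$, the prefix $x\restr_n$ is already fixed, so put $x(n):=y(i)\otimes h(x\restr_n)$. By construction $f(x)=y$, giving surjectivity. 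The bits at positions in $\Nat\setminus A$ are entirely free, and distinct choices there yield reals differing at those positions, hence distinct preimages; since $\Nat\setminus A$ is infinite (as for every hash considered here) this produces continuum-many $x$ with $f(x)=y$, so $f\inv(y)$ is uncountable.

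\emph{Measure preservation.} I claim $\mu\parb{f\inv\dbra{\tau}}=2^{-|\tau|}$ for every string $\tau$, which suffices as cylinders generate the Borel $\sigma$-algebra, so this forces the pushforward $\mu\circ f\inv$ to equal $\mu$. Write $m=|\tau|$; the event $f(x)\restr_m=\tau$ is the conjunction of the $m$ constraints $x(a_i)=\tau(i)\otimes h(x\restr_{a_i})$ for $i<m$, located at the positions $a_0,\dots,a_{m-1}$ (distinct, since the enumeration has no repetitions). Revealing the bits of $x$ in increasing order of position, each such constraint concerns one \emph{new} bit $x(a_i)$ together with the already-revealed prefix $x\restr_{a_i}$: given that prefix, $h(x\restr_{a_i})$ is a determined bit and the constraint pins down $x(a_i)$ via an XOR, an event of conditional probability exactly $\tfrac12$ whatever the prefix value is. By the chain rule the $m$ constraints multiply to $2^{-m}$, the remaining positions contributing a factor $1$. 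This lower-triangular-with-XOR structure is the crux of the argument: it is precisely what the hash cannot destroy, and I expect it to be the main obstacle to make fully rigorous.

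\emph{Random preservation.} As $f$ is total, $\dom(f)=\twome$ has measure $1$. Suppose $x$ is random but $f(x)$ is not, and fix a \ml test $(V_n)$ with $f(x)\in\bigcap_n V_n$. Because $f$ is total computable, the preimage of a cylinder is $\sz$ uniformly, so $U_n:=f\inv(V_n)$ is a uniformly $\sz$ family; measure preservation gives $\mu(U_n)=\mu(V_n)\le 2^{-n}$. Thus $(U_n)$ is a \ml test with $x\in\bigcap_n U_n$, contradicting the randomness of $x$. Hence $f$ preserves randomness, completing the proof; the remaining items are routine once the inversion procedure and the pull-back of tests are in place.
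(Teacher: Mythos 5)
Your proof is correct and takes essentially the same route as the paper's: bit-by-bit construction of preimages gives surjectivity and (via the free positions off $A$) uncountable fibers, and random-preservation follows from measure preservation on cylinders ($\mu(f\inv\dbra{\tau})=2^{-|\tau|}$, which is the paper's claim (i)) together with pulling back Martin-L\"of tests through $f$. The only cosmetic differences are that the paper pulls back a prefix-free universal test and sums the measures of the disjoint clopen preimages $V_\tau$, whereas you pull back an arbitrary test covering $f(x)$ and invoke the standard fact that agreement of $\mu\circ f\inv$ with $\mu$ on cylinders extends to all Borel sets; both rest on the same observation that each output bit pins down one fresh input bit conditionally on its prefix, so the step you flagged as a potential obstacle is in fact already rigorous.
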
\begin{proof}
Let $(a_i)$ be a computable enumeration  of $A$ without repetitions
and $f$ be the $h$-shuffle. Clearly 
$f$ is total computable and 
\begin{equation}\label{TpoGTf2D7V}
f(x)(i)\otimes h(x\restr_{a_i})=\parb{x(a_i)\otimes h(x\restr_{a_i})} \otimes h(x\restr_{a_i})=x(a_i).
\end{equation}
For each $y$ let $x$ be given by
\begin{equation}\label{liYIXCXVxA}
x(m):=
\begin{cases}
y(i)\otimes h(x\restr_{a_i})&\textrm{if $a_i=m$}\\[0.2cm]
\ \ \ \ \ \ 0&\textrm{otherwise.}
\end{cases}
\end{equation}
By \eqref{TpoGTf2D7V}  we have $f(x)=y$ so $f$ is a surjection. 
By replacing 0 in \eqref{liYIXCXVxA} with arbitrary bits we get that $f\inv(y)$ is a perfect set,
hence uncountable.

The oracle-use of $f(x)(n)$ is $\ell_n:=\max\sqbrad{a_i}{i\leq n}+1$. Let 
\[
V_{\tau} :=\ \sqbrad{\sigma\in 2^{\ell_{|\tau|}}}{f(\sigma)\preceq\tau}
\]
so $\dbra{V_{\tau}}=f\inv(\dbra{\tau})$.
Since $f(x)(n)$ depends exclusively on $x(a_n)$, $x\restr_{a_n}$:
\begin{enumerate}[(i)]
\item $\mu(V_{\tau i})=\mu(V_{\tau})/2$ for  $\tau\in\twomel$,  $i<2$, so $\mu(V_{\tau})=2^{-|\tau|}$
\item $\dbra{\tau}\cap\dbra{\rho}=\emptyset\impl \dbra{V_{\tau}}\cap \dbra{V_{\rho}}=\emptyset$
\item $V_i$ is finite and $i\mapsto V_i$ is computable.
\end{enumerate}
Let $(U_j)$ be a universal \ml test with \pf  $U_i\subseteq\twomel$ and
\[
E_j:=\bigcup_{\tau\in U_j} V_{\tau}.
\]
By (iii) the sets $E_j$ are \ce uniformly in $j$. By (i), (ii) we get
\[
\mu(E_j)=\sum_{\tau\in U_j} \mu(V_{\tau}) =\sum_{\tau\in U_j} 2^{-|\tau|}=\mu(U_j)\leq 2^{-j}
\]
so $(E_j)$ is a \ml test. Since $f\inv(\dbra{U_j})=\llbracket E_j\rrbracket$ if $y$ is not random and $f(x)=y$ then
$x$ is not random. So $f$ is random-preserving.
\end{proof}

\subsection{Inversions of hash-shuffles}\label{DZjVDbI3sf}
We show that  hash-shuffles have no computable probabilistic inversions.
\begin{defi}
A {\em prediction} is a partial  $p:\subseteq \twomel\to  \{0,1\}$ and 
\begin{itemize}
\item $y$ is {\em $p$-predictable} if $p(y\restr_n)\downarrow$ for infinitely many $n$ and \[p(y\restr_n)\downarrow\implies y(n)=p(y\restr_n)\]
\item $y$ is {\em $r$-predictable} if it is $p$-predictable for a prediction $p\leqT r$.
\end{itemize}
\end{defi}
We need a property of random reals regarding predictions.
\begin{lem}\label{EFUhx24qYD}
If $y$ is $r$-predictable then  $y$ is not $r$-random.
\end{lem}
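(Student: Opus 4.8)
The plan is to prove the contrapositive in effective form: from a prediction $p\leqT r$ witnessing that $y$ is $p$-predictable, I will exhibit a Martin-L\"of test relative to $r$ that captures $y$. The conceptual engine is the betting strategy that stakes everything on $p$'s guess. For a string $\sigma$ set
\[
d(\sigma)=\prod_{n<|\sigma|} c_n(\sigma),\qquad
c_n(\sigma)=
\begin{cases}
2 & \text{if } p(\sigma\restr_n)\de \text{ and } \sigma(n)=p(\sigma\restr_n),\\
0 & \text{if } p(\sigma\restr_n)\de \text{ and } \sigma(n)\neq p(\sigma\restr_n),\\
1 & \text{if } p(\sigma\restr_n)\un.
\end{cases}
\]
Each factor is a fair two-point bet, so $d$ is a martingale with $d(\estring)=1$. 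Along $y$ every defined prediction is correct, so no factor is ever $0$ while a factor $2$ occurs at each of the infinitely many $n$ with $p(y\restr_n)\de$; hence $d(y\restr_n)\to\infty$. By Ville's maximal inequality the level sets $A_k=\sqbrad{z}{\sup_n d(z\restr_n)\geq 2^k}$ satisfy $\mu(A_k)\leq 2^{-k}$, and $y\in\bigcap_k A_k$. So the whole content is to turn $(A_k)$ into a \emph{uniformly} $\Sigma^0_1(r)$ family; once that is done, $y$ lies in every level of a genuine $r$-test and is therefore not $r$-random.

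In the benign case where the domain of $p$ is $r$-decidable this is immediate: then $d$ is $r$-computable, each $A_k$ is an $r$-c.e.\ open set, $(A_k)$ is the required test, and we are done. The main obstacle is the \emph{partiality} of $p$. Since its domain is only $\Sigma^0_1(r)$, the predicate ``$p(\sigma\restr_n)\un$'' is co-c.e., so $d$ is not $r$-computable and $A_k$ is not visibly effectively open. The tempting remedy --- replace $p$ by its time-bounded approximations $p_s$, form the computable martingales $d_s$, and take the union $\bigcup_s\sqbrad{z}{\sup_n d_s(z\restr_n)\geq 2^k}$ --- fails badly: an early ``correct'' firing at a long string can be invalidated when a firing at a \emph{shorter} prefix is enumerated later (sending $d$ to $0$ there), yet an open test cannot retract what it has already enumerated. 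One checks that this over-counting can drive the measure of the approximating open set up towards $1$, so the naive union is not a test at all.

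The fix I propose is to realise the strategy not as a union of level sets but as a single left-c.e.\ $r$-supermartingale $M$ succeeding on $y$, and then to appeal to the standard (relativised) equivalence between such supermartingales and $r$-tests. Concretely, one enumerates the firing nodes of $p$ and commits capital to a guess only once it has been confirmed, banking realised winnings into a reserve that is never re-staked, so that the stagewise approximation to $M$ is monotone nondecreasing (hence genuinely lower semicomputable). The delicate point --- and the crux of the whole argument --- is to make this bookkeeping keep the approximation monotone despite the fact that each bet is two-sided and is discovered only after a delay: the losing side must never be lowered once raised. This is exactly where the hypothesis that $y$ has \emph{no} erroneous prediction (not merely infinitely many correct ones) is used, since it guarantees that $y$'s own firing sequence is never the one that has to be undone, so the confirmed, banked capital accumulated along $y$ is unbounded while off-path revisions cost $M$ nothing.
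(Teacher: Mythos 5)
Your diagnosis of the obstacle is accurate, and it is the same obstacle that any proof of this lemma must face: since $\dom(p)$ is only $\Sigma^0_1(r)$, the double-or-nothing martingale is not $r$-computable, and stagewise approximations of its level sets over-count in a way that an open set cannot retract (your $\mu\to 1$ remark is correct: take $p\equiv 0$). But your last paragraph, which is where the entire proof is supposed to happen, is an assertion rather than a construction. The ``confirmed-bet, banked-reserve'' supermartingale $M$ is never defined, and the two properties it needs pull against each other in exactly the situation you flag: when a firing at a \emph{short} string $\sigma$ is enumerated late, you must raise $M$ on the $\sigma p(\sigma)$ side, while extensions of the opposite side $\sigma(1-p(\sigma))$ may already carry large committed values from bets confirmed earlier at longer strings; monotonicity forbids lowering them, so the supermartingale inequality at $\sigma$ forces $M(\sigma)$, and hence $M(\estring)$, upward. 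The observation that ``$y$'s own firing sequence is never undone'' cannot rescue this, because $M$ has to be left-c.e.\ in $r$ from $p$ alone and must satisfy the inequality on \emph{all} strings, not just along $y$; it cannot be tailored to $y$ without presupposing what is to be proved.

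In fact no bookkeeping can close this gap, because the statement fails for gap-tolerant predictions as defined in the paper. Let $\Omega=\lim_s\Omega_s$ be Chaitin's halting probability with its increasing computable approximation, and define the partial computable prediction $p(\sigma):\simeq 1$, halting if and only if $\exists s\ \parb{\Omega_s\restr_{|\sigma|+1}=\sigma 1}$. If $\sigma\prec\Omega$ and $p(\sigma)\de$ then, as reals, $\Omega\geq\Omega_s\geq 0.\sigma 1$, and since $\Omega$ is irrational this forces $\Omega(|\sigma|)=1$; conversely if $\Omega(|\sigma|)=1$ then the approximation eventually confirms the first $|\sigma|+1$ bits, so $p(\sigma)\de$. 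Hence along $\Omega$ the prediction fires exactly at the positions carrying a $1$ --- infinitely often and always correctly --- so $\Omega$ is $p$-predictable, yet $\Omega$ is random. By the Levin--Schnorr characterization you invoke, no left-c.e.\ supermartingale built from $p$ can succeed on every $p$-predictable real, so your postponed step is not a technicality: it is impossible. (The same example shows that the nested c.e.\ prefix-free covers $\hat V_i$ asserted without construction in the paper's own proof cannot all exist, so Lemma \ref{EFUhx24qYD} fails as stated; what makes predictions dangerous here is precisely that divergence is tolerated rather than penalized, in contrast with Mises--Wald--Church selection rules, which hang on divergence. Any repair must either penalize divergence or exploit extra structure of the particular prediction, such as the one built from $\hat g_r$ in the proof of Lemma \ref{ty4nwYJGee}.)
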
\begin{proof}
Without loss of generality assume that $r$ is computable.
Assuming that $y$ is $p$-predictable for a partial computable prediction $p$
it suffices to construct a \ml test $(V_i)$  with $y\in \bigcap_i \dbra{V_i}$.

Let $\hat V_0, V_0\subseteq\twomel$ be \ce  and \pf sets  such that
\begin{itemize}
\item $\hat V_0$ contains a prefix of every $p$-predictable real
\item $V_0:=\sqbrad{\sigma \ p(\sigma)}{\sigma\in \hat V_0}$.
\end{itemize}
Then $\mu(V_0)\leq 1/2$.
Assuming that $V_i$ has been defined let 
$\hat V_{i+1}, V_{i+1}\subseteq\twomel$ be \ce  and \pf  containing proper extensions of strings in $V_i$ and
\begin{itemize}
\item $\hat V_{i+1}$ contains a prefix of every $p$-predictable real
\item $V_{i+1}:=\sqbrad{\sigma\  p(\sigma)}{\sigma\in \hat V_{i+1}}$.
\end{itemize}
Then $(V_i)$ are uniformly \ce and
$\mu(V_{i+1})\leq \mu(\hat V_{i+1})/2\leq \mu(V_{i})/2$.

So $(V_i)$ is a \ml test and by definition $y\in \bigcap_i \dbra{V_i}$.
\end{proof}
We say that $\hat g$ is a {\em representation} of  $g: \subseteq \twome\to \twome$  if 
\begin{itemize}
\item $\hat g: \twomel\to \twomel$ is $\preceq$-preserving and $\hat g(\lambda)=\lambda$
\item $g(x)\de\iff  \lim_{\tau\prec x} \hat g(\tau)= g(x)\iff \lim_{\tau\prec x} \abs{\hat  g(\tau)}=\infty$.
\end{itemize}
Every partial computable $g$ has a computable representation $\hat g$. 
\begin{lem}\label{ty4nwYJGee}
If $h$ is an $A$-hash and $f$ is the $h$-shuffle 
\begin{enumerate}[(i)]
\item $f$ has an $A$-computable inversion
\item $f$ is not probabilistically invertible on any random $y\not\geqT A$
\item $f$ is oneway relative to each  $w\not\geqT A$.
\end{enumerate}
\end{lem}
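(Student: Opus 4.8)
The plan is to prove the positive part (i) by a direct bit–recursion and to reduce both hardness claims (ii) and (iii) to a single prediction argument via Lemma~\ref{EFUhx24qYD}. For (i), the right inverse is the map of \eqref{liYIXCXVxA}: I would compute $g(y)=x$ bit by bit, using $A$ as an oracle to decide for each $m$ whether $m\in A$. If $m\notin A$ set $x(m)=0$; if $m=a_i$ (the index $i$ is recovered from the fixed computable enumeration) set $x(m)=y(i)\otimes h(x\restr_m)$, which is legitimate because $x\restr_m$ has already been produced and $|x\restr_m|=m\in A$ puts it in the domain of $h$. Then $g\leqT A$ and $f(g(y))=y$ by \eqref{TpoGTf2D7V}; in particular $f$ fails to be oneway relative to every $w\geqT A$, so the hypothesis $w\not\geqT A$ in (iii) is necessary.

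The core of (ii) and (iii) is the following claim: if $z\not\geqT A$, $G\leqT z$, $f(G(y))=y$, and $z\oplus y\not\geqT A$, then $y$ is $z$-predictable, hence by Lemma~\ref{EFUhx24qYD} not $z$-random. The predictor is
\[ p(\sigma):=x(a_{|\sigma|})\otimes h\!\left(x\restr_{a_{|\sigma|}}\right),\qquad x:=G(\sigma\oplus z), \]
declared defined exactly when $G$, reading only $\sigma$ on the $y$-side, already outputs more than $a_{|\sigma|}$ bits. When $\sigma=y\restr_k$ and $p$ is defined, monotonicity of the representation of $G$ forces $x\restr_{a_k+1}=G(y)\restr_{a_k+1}$, so $p(y\restr_k)=G(y)(a_k)\otimes h(G(y)\restr_{a_k})=f(G(y))(k)=y(k)$ is a correct next-bit prediction; thus $p\leqT z$ predicts $y$ provided it is defined infinitely often along $y$.

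The step I expect to be the main obstacle is showing that $p$ is defined infinitely often, i.e.\ that $a_k<\ell(k)$ for infinitely many $k$, where $\ell(k)\leqT z\oplus y$ denotes the number of output bits produced by $G$ from $y\restr_k$. I would argue the contrapositive: if $a_k\geq\ell(k)$ for all $k\geq k_0$, then $A\leqT z\oplus y$, contradicting $z\oplus y\not\geqT A$. Indeed, given $m$, search (using $z\oplus y$) for some $k\geq k_0$ with $\ell(k)>m$; since $\ell$ is nondecreasing, every later-enumerated element satisfies $a_{k'}\geq\ell(k')\geq\ell(k)$ for $k'\geq k$, so all members of $A$ below $\ell(k)$ already occur among $a_0,\dots,a_{k-1}$. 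Hence $A\cap[0,\ell(k))$ is read off the enumeration, deciding $m\in A$. This is the heart of the lemma, and it is exactly where the assumption $z\oplus y\not\geqT A$ is consumed — the $A$-computable inverter of (i) is the boundary case where no prediction opportunities arise.

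Finally I would deduce (ii) and (iii) by measure-theoretic bookkeeping. For (iii), given a probabilistic inversion $g\leqT w$ with $w\not\geqT A$, Fubini yields $r^{*}$ with $\mu\{y:f(g(y\oplus r^{*}))=y\}>0$, and the relativized Sacks theorem (available since $A\not\leqT w$) lets me also require $A\not\leqT w\oplus r^{*}=:z$; putting $G(\cdot)=g(\cdot\oplus r^{*})$, almost every $y$ in this positive-measure slice is $z$-random with $z\oplus y\not\geqT A$, so the core claim contradicts $z$-randomness. For (ii), with a fixed random $y\not\geqT A$ and a computable probabilistic inversion of positive $r$-measure, I choose $r^{*}$ in the success set that is $y$-random — so $y$ is $r^{*}$-random by van~Lambalgen's theorem — and with $A\not\leqT y\oplus r^{*}$ (Sacks relative to $y$, using $y\not\geqT A$); the core claim with $z=r^{*}$ then contradicts that $y$ is $r^{*}$-random.
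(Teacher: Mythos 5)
Your proposal is correct and takes essentially the same route as the paper: the identical $A$-recursive right inverse for (i), and for (ii)/(iii) the same next-bit predictor built from a representation of the inverter, with the same dichotomy (either the predictor fires infinitely often along $y$, contradicting randomness via Lemma~\ref{EFUhx24qYD}, or its cofinite failure lets $z\oplus y$ compute $A$ by reading $A\cap[0,\ell(k))$ off the enumeration), wrapped in the same van~Lambalgen/Sacks measure bookkeeping that the paper leaves implicit in its choice of $r$. The only (benign) difference is cosmetic: your predictor at $\sigma$ targets bit $|\sigma|$, matching the paper's definition of prediction exactly, whereas the paper's version predicts bit $i+1$ from $y\restr_i$.
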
\begin{proof}
Let $(a_i)$ be a computable enumeration of $A$ without repetitions so 
\[
f(x)(i)=x(a_i)\otimes h(x\restr_{a_i})
\]
defines the $(A,h)$-shuffle.
Define $\ds:\twome\to\twome$ by
\[
\ds(y)(n)=
\begin{cases}
y(i)\otimes h(\ds(y)\restr_n)&\textrm{if $a_i=n$}\\[0.2cm]
\hspace{0.8cm}0&\textrm{if $n\not\in A$}
\end{cases}
\]
so $\ds\leqT A$. For each $i,n$ with  $a_i=n$ we have
\[
\ds(y)(a_i) = \ds(y)(a_i) = y(i)\otimes h(\ds(y)\restr_{n})
\]
so for $x:=\ds(y)$ and each $i$ we have
\begin{equation}\label{QufRiIdtk2}
f(\ds(y))(i)=\ds(y)(a_i)\otimes h(x\restr_{a_i}) = y(i).
\end{equation}
This implies $f(\ds(y))=y$ for each $y$ which concludes the proof of (i).

Assuming that $g$ is partial computable, $y\not\geqT A$ is random and 
\[
E:=\sqbrad{r}{f(g(y, r))=y}
\] 
it remains to show that $\mu(E)=0$. For a contradiction assume otherwise and let
$r\in E$ be such that $y$ is $r$-random and $A\not\leq_T y\oplus r$.
Let 
\[
g_r(z):=g(z,r)
\hspace{0.3cm}\textrm{so}\hspace{0.3cm}
g_r\leqT r
\hspace{0.3cm}\textrm{and}\hspace{0.3cm}
f(g_r(y))=y.
\]
We define a prediction $p\leqT r$. For each $i$ and $\sigma\in 2^i$ let
\[
p(\sigma):\simeq
\begin{cases}
\hat g_r(\sigma)(a_{i+1})\otimes h(\hat g_r(\sigma)\restr_{a_{i+1}})
&\textrm{if $|\hat g_r(\sigma)|>a_{i+1}$}\\[0.2cm]
\hspace{1.5cm}\un &\textrm{otherwise.}
\end{cases}
\]
where $\hat g_r\leqT r$ is a representation of $g_r$.
Since $f(g_r(y))=y$, for all $i$
\[
y(i+1)
=f(g_r(y))(i+1)=g_r(y)(a_{i+1})\otimes h(g_r(y)\restr_{a_{i+1}})
\] 
so if $p(\sigma)$ halts for $\sigma\prec y$ then $p$ predicts $y$ correctly on $\sigma$. 

Since $y$ is $r$-random, by Lemma \ref{EFUhx24qYD}, $p$ cannot predict $y$ infinitely often so 
$\exists j_0\ \forall i>j_0,\ p(y\restr_i)\uparrow$.
%\[
%\exists j_0\ \forall i>j_0,\ \bs(y\restr_i)\uparrow.
%\] 
Since $\forall i,\ \hat g_r(y\restr_i)\downarrow$   we get 
\begin{equation}\label{8f4bd488}
|\hat g_r(y\restr_i)|\leq a_{i+1}\ \ \text{ for all }i>j_0.
\end{equation}
Let $t_n:=\min\sqbrad{t}{t>j_0\wedga |\hat g_r(y\restr_{t})|>n}$. We claim that
\[
n\in A\iff n\in\{a_0,\dots, a_{t_n}\}.
\]
Indeed if $n=a_{i+1}$ for some $i\geq t_n$ then 
\[
|\hat g_r(y\restr_i)|\geq |\hat g_r(y\restr_{t_n})|>n=a_{i+1}
\]
which contradicts  \eqref{8f4bd488}. Since $(t_n)\leqT y\oplus r$
we get  $A\leq_T y\oplus r$ which contradicts the choice of $r$.
We conclude that  $\mu(E)=0$ so (ii) holds. 

For (iii) consider the above argument for  $w\not\geqT A$ and $g\leqT w$.
If
\begin{equation}\label{Wp6nk3eFc}
y\oplus w\not\geqT A\wedga \textrm{$y$ is $w$-random}
\end{equation}
the above argument gives $\mu(E)=0$.
Since \eqref{Wp6nk3eFc} holds 
for almost all $y$  there is no probabilistic inversion $g\leqT w$ of $f$ so (iii) holds.
\end{proof}
\begin{coro}\label{QZG9TROwmy}
If $h$ is an $A$-hash then the $h$-shuffle is 
\begin{enumerate}[(i)]
\item total computable and random-preserving  
\item surjective and nowhere injective
\item oneway relative to  each $w\not\geqT A$.
\end{enumerate}
\end{coro}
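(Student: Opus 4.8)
The plan is to assemble the Corollary directly from the two preceding lemmas, since all of the substantive work has already been carried out there. There is no new construction to perform: the task is merely to match each clause of the Corollary against the appropriate clause of Lemma \ref{9IqHVDCOIU} or Lemma \ref{ty4nwYJGee}.

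For clause (i) I would observe that ``total computable and random-preserving'' is precisely the content of the first bullet of Lemma \ref{9IqHVDCOIU}, which establishes these properties for the $h$-shuffle of an arbitrary $A$-hash. No further argument is required.

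For clause (ii), surjectivity is again supplied by the first bullet of Lemma \ref{9IqHVDCOIU}. For nowhere injectivity I would invoke the second bullet of the same lemma, which yields the stronger conclusion that every fibre $f\inv(y)$ is uncountable; since an uncountable set contains at least two distinct points, $f$ is in particular nowhere injective. One could in fact state the Corollary with ``strongly nowhere injective'' here, but the weaker phrasing is all that the later sections need.

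For clause (iii) I would simply cite Lemma \ref{ty4nwYJGee}(iii), which asserts verbatim that the $h$-shuffle is oneway relative to each $w\not\geqT A$. I expect no genuine obstacle at any step; the only points worth flagging are the elementary implication ``uncountable fibre $\Rightarrow$ non-injective'' used in (ii), and the remark that when $A$ is noncomputable one may take $w=\emptyset$ in (iii)---since then $\emptyset\not\geqT A$---to recover that $f$ is oneway in the unrelativized sense as well, whereas when $A$ is computable clause (iii) holds vacuously.
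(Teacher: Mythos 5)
Your proof is correct and matches the paper's own argument exactly: the paper also proves (i) and (ii) by citing Lemma \ref{9IqHVDCOIU} and (iii) by citing Lemma \ref{ty4nwYJGee}. Your extra remarks (uncountable fibres imply non-injectivity, and the vacuity/unrelativized specializations of (iii)) are sound but not needed.
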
\begin{proof}
By Lemma \ref{9IqHVDCOIU} we get  (i), (ii) and by
Lemma \ref{ty4nwYJGee} we get (iii).
\end{proof}
\section{Collision-resistance}\label{tEVOhEWHi7}

We exhibit a total computable oneway and nowhere injective collision-resistant $f:\twome\to\twome$.
In \S\ref{ID1ntaD4Cc} we define a hash $h$ based on the universal enumeration 
and show that the $h$-shuffle has the required properties.

In \S\ref{epEBCzKPQM} we obtain hashes from  pairs of disjoint  \ce sets
and use them to obtain collision-resistant oneway functions of various strengths.
Let 
\[
\textrm{ $(\sigma,n)\mapsto\tuple{\sigma, n}$\ \ with\ \ $|\sigma|< \tuple{\sigma, n}$}
\]
be a computable bijection between $\twomel\times\Nat, \Nat$.

\subsection{Hashing with the universal predicate}\label{ID1ntaD4Cc}
Fix a universal effective enumeration $(\varphi_i)$  of all partial computable boolean functions on $\Nat$
so $\zj=\sqbrad{i}{\varphi_i(i)\de}$ is the halting set.

The {\em universal-predicate} is $\varphi_i(i)$ and
a boolean {\em total extension} of it is a boolean function $\psi$ with $\forall i\in\zj,\ \psi(i)=\varphi_i(i)$.

\begin{lem}\label{SkANwtV64A}
There is a hash-shuffle $f$ such that every pair of $f$-siblings computes a  boolean total extension of the universal predicate.
\end{lem}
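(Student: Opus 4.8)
Since an $h$-shuffle is automatically total computable, surjective, nowhere injective and oneway relative to every $w\not\geqT A$ by Corollary~\ref{QZG9TROwmy}, the whole task is to \emph{design} the hash $h$ (equivalently, its domain $A$) so that any two $f$-siblings jointly compute a total extension of the universal predicate. It is worth recording why this is the right target: the total extensions are exactly the $\{0,1\}$-valued completions that agree with $\varphi_i(i)$ on $\zj$, and this is precisely the class whose negligibility will be exploited in the next subsection. We therefore aim at it directly rather than at the (formally stronger, and as we shall see, unnecessary) statement that siblings compute $\zj$.

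The plan is to build $A$ and $h$ from the enumeration $(\varphi_i)$ through the pairing $\tuple{\sigma,s}$, using the second coordinate $s$ purely as a computation \emph{budget}. For a fixed meaning attached to an index $e$ we provide, for every $s$, a coordinate of $f$ whose hash runs $\varphi_e(e)$ for $s$ steps; as $s\to\infty$ the budget is unbounded, so each halting computation is eventually seen at some coordinate, even though every single coordinate uses only a bounded budget. Concretely, for each $e$ I reserve a fixed \emph{read-off} position $q_e$ and, for every $s$, a coordinate $a_i$ whose hash inspects the bit of its argument at $q_e$ together with the outcome of running $\varphi_e(e)$ for $s$ steps, combined so that once $\varphi_e(e)$ has halted within the budget the sibling identity $x(a_i)\otimes z(a_i)=h(x\restr_{a_i})\otimes h(z\restr_{a_i})$ at that coordinate collapses to the equation $x(q_e)\otimes z(q_e)=\varphi_e(e)$.

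Given any $f$-sibling $(x,z)$ I first locate, from $x\oplus z$, the least position $m$ at which $x$ and $z$ differ. If $m=a_i\in A$ then $x\restr_{a_i}=z\restr_{a_i}$, so $h(x\restr_{a_i})=h(z\restr_{a_i})$ and the identity above forces $x(a_i)=z(a_i)$, a contradiction; hence $m\notin A$. I then set $\psi(e):=x(q_e)\otimes z(q_e)$, which is computable from $x\oplus z$ by reading two bits and is total, its value on $e\notin\zj$ being immaterial for a completion. Its correctness on $\zj$ does \emph{not} require us to find the enforcing coordinate: we read $\psi(e)$ at the fixed position $q_e$, while the equation pinning that bit to $\varphi_e(e)$ is guaranteed simply because it is one of the infinitely many sibling identities that $(x,z)$ already satisfies, at some (unlocated, high-budget) coordinate for $e$. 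This is exactly what lets $x\oplus z$ avoid dominating the halting times of $(\varphi_i)$ — which would illegitimately yield $\zj$ — and keeps $\psi$ a genuine, total Turing reduction; it is also consistent with the fact that completions may be of low degree.

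The main obstacle is the precise design of $h$ that makes all of this cohere. I must arrange the hashes on the coordinates attached to $(e,s)$ so that: (a) for convergent $e$ the high-budget identities collectively pin $x(q_e)\otimes z(q_e)$ to the single value $\varphi_e(e)$ — and pinning an \textsc{xor} to a \emph{prescribed} bit, rather than to a homogeneous relation, is the delicate point, since the hash sees only one argument at a time and the only asymmetry between $x$ and $z$ available to it is the difference it already reads off; (b) for divergent $e$ no identity constrains $q_e$, so the read-off bit is free and any value still yields a completion; (c) the gadgets for different $e$ occupy disjoint coordinates and do not interfere, so that the whole is consistent; and (d) the resulting $h$ is total computable on $\sqbrad{\sigma}{|\sigma|\in A}$ with $A$ infinite and co-infinite, so that $f$ is a bona fide hash-shuffle and Corollary~\ref{QZG9TROwmy} applies (and Lemma~\ref{9IqHVDCOIU} supplies the uncountable fibres in which siblings live). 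Equivalently, the difficulty is to rig $h$ so that the \emph{only} way two distinct reals can share an $f$-image is for their bit-differences at the positions $q_e$ to trace a path through the $\Pi^0_1$ tree of completions; once this is secured, reading that path off $(x,z)$ is immediate and the lemma follows.
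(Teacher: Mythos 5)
Your proposal does not contain a proof: the hash $h$ is never defined. What you give is a plan whose central step --- your item (a), making the sibling identities pin $x(q_e)\otimes z(q_e)$ to the prescribed value $\varphi_e(e)$ --- is exactly the content of the lemma, and you explicitly leave it open (``the main obstacle is the precise design of $h$''). Moreover, the scaffolding you commit to points away from any solution. First, a \emph{fixed} read-off position $q_e$ cannot work: the hash is one fixed computable function applied to each sibling's prefix, so in the quantity $h(x\restr_{a_i})\otimes h(z\restr_{a_i})$ any constant you try to inject (such as $\varphi_e(e)$) cancels under XOR, and if $h$ reads $\tau(q_e)$ you only get homogeneous relations such as $x(a_i)\otimes z(a_i)=x(q_e)\otimes z(q_e)$ --- never an equation with $\varphi_e(e)$ on one side. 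The asymmetry you correctly identify as necessary cannot be attached to a position reserved in advance. Second, your budget device puts a coordinate into $A$ for \emph{every} pair $(e,s)$, which makes $A$ computable; then $h$ is trivial by definition, so $f$ is not a hash-shuffle at all (your condition (d), ``$A$ infinite and co-infinite,'' is not the non-triviality condition), and by Lemma~\ref{ty4nwYJGee}(i) such an $f$ even has a \emph{computable} inversion, which would destroy the onewayness needed in Theorem~\ref{b92E3DHAD}.

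The paper resolves both problems at once by keying coordinates to pairs (\emph{string}, \emph{index}) rather than (index, budget): take $A:=\sqbrad{\tuple{\sigma,n}}{\sigma\in\twomel\wedga n\in\zj}$, which is c.e., noncomputable, and whose membership already certifies halting, so the hash may legally compute $\varphi_n(n)$ with no step-counting: for $\abs{\tau}=\tuple{\sigma,n}$ set $h(\tau):=\varphi_n(n)$ if $\sigma\prec\tau$ and $h(\tau):=0$ otherwise. The needed asymmetry is then supplied by the sibling pair itself. If $f(x)=f(z)$ and $\sigma$ is the least prefix of $x$ that is not a prefix of $z$, then at every coordinate $\tuple{\sigma,n}$ with $n\in\zj$ we have $h(x\restr_{\tuple{\sigma,n}})=\varphi_n(n)$ but $h(z\restr_{\tuple{\sigma,n}})=0$, so the sibling identity collapses to $x(\tuple{\sigma,n})\otimes z(\tuple{\sigma,n})=\varphi_n(n)$. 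Hence $\psi(n):=x(\tuple{\sigma,n})\otimes z(\tuple{\sigma,n})$ is total, computable from $x\oplus z$ (one finds $\sigma$ as the first disagreement), and extends the universal predicate. In other words, the read-off positions are $\tuple{\sigma,n}$: they depend on the pair through $\sigma$ and on $n$, and quantifying the first coordinate over \emph{all} strings is what wires every possible first-difference into the hash in advance --- the idea your outline is missing.
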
\begin{proof}
Let $(\sigma_i, n_i)$ be an effective enumeration of  $\twomel\times \zj$.

Let $A:=\sqbrad{\tuple{\sigma_i, n_i}}{i\in\Nat}$ and define the $A$-hash: 
\[
h(\tau):= 
\begin{cases}
\varphi_{n_i}(n_i)&\textrm{if $\sigma_i\prec \tau\wedga \tau\in 2^{\tuple{\sigma_i, n_i}}$}\\[0.2cm]
\ \ \ 0 &\textrm{if $\sigma_i\not\prec \tau\wedga \tau\in 2^{\tuple{\sigma_i, n_i}}$}
\end{cases}
\]
so the $h$-shuffle  is given by 
$f(x)(i)=:h(x\restr_{\tuple{\sigma_i, n_i}})\otimes x(\tuple{\sigma_i, n_i})$.

Suppose that $x,z$ are $f$-siblings and let 
$\sigma$ be the least prefix of $x$ which is not a prefix of $z$. 
By the definition of $f$ and $f(x)=f(z)$ 
\[
\forall n\in \zj\ \parb{x(\tuple{\sigma,n})=z(\tuple{\sigma,n}) \iff \varphi_n(n)=0}.
\]
Therefore $\psi(n):=x(\tuple{\sigma,n})\otimes z(\tuple{\sigma,n})$ is an $(x\oplus z)$-computable boolean total extension of $\varphi_i(i)$.
\end{proof}
We say  $\psi:\Nat\to\Nat$ is  {\em diagonally non-computable}  if $\forall i\in\zj, \psi(i)\neq \varphi_i(i)$. Let
\[
\DNC_2:=\sqbrad{\psi}{\forall n,\ \psi(n)\in\{0,1\}\wedga \forall i\in\zj, \psi(i)\neq \varphi_i(i)}
\]
be the set of diagonally non-computable 2-valued functions. 
By \cite{Jockusch197201} almost all oracles fail to compute a member of $\DNC_2$.
\begin{thm}\label{b92E3DHAD}
There exists a total computable $f:\twome\to\twome$ which is
\begin{enumerate}[(i)]
\item a random-preserving  oneway surjection
\item collision-resistant and  nowhere injective
\end{enumerate}
and  each  random $w\not\geqT \zj$:
\begin{itemize}
\item  does not compute any probabilistic inversion of $f$ 
\item  does not compute  any pair of $f$-siblings. 
\end{itemize}
\end{thm}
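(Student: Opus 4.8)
The plan is to take $f$ to be the very hash-shuffle produced in Lemma~\ref{SkANwtV64A}, with its $A$-hash $h$ on $A=\sqbrad{\tuple{\sigma,n}}{\sigma\in\twomel,\ n\in\zj}$, and to read off all the required properties from the general theory of \S\ref{opCuMKHY9j} together with the sibling-to-extension reduction of Lemma~\ref{SkANwtV64A}. The first thing to record is that $A\equiv_T\zj$: one decides $n\in\zj$ by asking whether $\tuple{\lambda,n}\in A$, and conversely $A$ is enumerable from any enumeration of $\zj$. In particular $A$ is noncomputable, so $h$ is nontrivial and $f$ is a genuine hash-shuffle, and the hypothesis $w\not\geqT\zj$ is interchangeable with $w\not\geqT A$ throughout.

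With this identification the easy half is immediate. By Corollary~\ref{QZG9TROwmy} the function $f$ is total computable, random-preserving, surjective, nowhere injective, and oneway relative to every $w\not\geqT A$. Taking any computable oracle $w$ (which satisfies $w\not\geqT A$ since $A$ is noncomputable) shows that $f$ is oneway, and together with random-preservation and surjectivity this yields (i); nowhere-injectivity supplies half of (ii). Moreover the first bullet is nothing but onewayness relative to $w$ for $w\not\geqT A$, i.e.\ for $w\not\geqT\zj$, which is exactly Lemma~\ref{ty4nwYJGee}(iii) --- so in fact \emph{every} (not merely every random) such $w$ fails to compute a probabilistic inversion.

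The content of the theorem is therefore concentrated in collisions, and here the engine is Lemma~\ref{SkANwtV64A}: every pair $(x,z)$ of $f$-siblings computes a boolean total extension $\psi$ of the universal predicate $\varphi_i(i)$. Flipping every bit turns $\psi$ into a member of $\DNC_2$, so any oracle that computes an $f$-sibling computes a member of $\DNC_2$ --- equivalently, it has PA degree. Collision-resistance now follows softly: by \cite{Jockusch197201} the oracles computing a member of $\DNC_2$ form a null class, hence so do the oracles computing an $f$-sibling, i.e.\ $S_f$ is negligible; with nowhere-injectivity this completes (ii).

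For the final bullet I would argue pointwise. Fix a \ml random $w$ with $w\not\geqT\zj$ and suppose for contradiction that $w$ computes an $f$-sibling. Then, as above, $w$ has PA degree; but by Stephan's theorem every \ml random real of PA degree computes $\zj$, contradicting $w\not\geqT\zj$. Hence no such $w$ computes a pair of $f$-siblings. (This already re-proves collision-resistance, since the reals that are random and not above $\zj$ are co-null.) The main obstacle is exactly this last step: the measure statement is cheap, but isolating \emph{which} random oracles avoid siblings needs the sharp fact that a \ml random real has PA degree if and only if it computes $\zj$, rather than a bare measure-zero statement; and it is essential that the hash domain was arranged so that $A\equiv_T\zj$, placing the PA-degree obstruction precisely on the cone above $\zj$.
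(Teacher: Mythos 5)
Your proposal is correct and follows essentially the same route as the paper's proof: the same hash-shuffle from Lemma~\ref{SkANwtV64A}, Corollary~\ref{QZG9TROwmy} for part (i), nowhere-injectivity and the inversion bullet, and the siblings-to-$\DNC_2$-to-Stephan argument (the fact recorded in \eqref{K1Dnu41wrJ}, cited from \cite{MR2258713frank, Levin2013FI}) for the sibling bullet and collision-resistance. The details you spell out --- that $A\equivT\zj$, the bit-flip turning a boolean total extension of the universal predicate into a member of $\DNC_2$, and the Jockusch--Soare nullity of the $\DNC_2$-computing oracles \cite{Jockusch197201} --- are exactly the steps the paper leaves implicit or sets up just before the theorem.
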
\begin{proof}
Let $h$ be the hash of Lemma \ref{SkANwtV64A} and $f$ be the $h$-shuffle. 

By Corollary \ref{QZG9TROwmy}  $f$
is a random-preserving    nowhere injective surjection
and no $w\not\geqT \zj$ computes any probabilistic inversion of $f$.
By \cite{MR2258713frank, Levin2013FI}:
\begin{equation}\label{K1Dnu41wrJ}
\textrm{if $w$ is random and computes a member of $\DNC_2$ then $w\geqT \zj$}
\end{equation}
By the choice of $h$ and \eqref{K1Dnu41wrJ}, if $w\not\geqT \zj$ is random then it does not compute any pair of $f$-siblings.
In particular $f$ is oneway and collision-resistant.
\end{proof}

\subsection{Hashing by  inseparable sets}\label{epEBCzKPQM}
Given \ce   $B, C\subseteq\Nat$ with $B\cap C=\emptyset$ if $M\subseteq\Nat$  and
\[
\parb{M\supseteq B\wedga M\cap C=\emptyset}\ \vee\ 
\parb{M\supseteq C\wedga M\cap B=\emptyset}
\]
we say that $M$ is  {\em $(B,C)$-separating}.

\begin{lem}\label{SkANwtV64Ab}
If $B, C\subseteq\Nat$ are disjoint \ce sets
there is a hash $h$ such that  every pair of siblings for the $h$-shuffle 
computes a $(B,C)$-separating set.
\end{lem}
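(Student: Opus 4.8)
The plan is to follow the template of the proof of Lemma~\ref{SkANwtV64A}, replacing the universal predicate by a partial selector that records whether an index lands in $B$ or in $C$. Since $B\cap C=\eset$, the partial function
\[
b(n):\simeq
\begin{cases}
1 & \textrm{if } n\in B\\
0 & \textrm{if } n\in C
\end{cases}
\]
is well defined and partial computable, with $b(n)\de$ exactly when $n\in B\cup C$. The decisive observation is that a total boolean extension $\psi$ of $b$ is essentially a $(B,C)$-separating set: if $\psi(n)=b(n)$ for all $n\in B\cup C$, then $M:=\sqbrad{n}{\psi(n)=1}$ satisfies $M\supseteq B$ and $M\cap C=\eset$, so $M$ is $(B,C)$-separating via the first disjunct. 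Hence it suffices to build a hash whose siblings compute such a $\psi$.

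Mirroring Lemma~\ref{SkANwtV64A}, I would let $(\sigma_i,n_i)$ be an effective enumeration of $\twomel\times(B\cup C)$ without repetitions, set $A:=\sqbrad{\tuple{\sigma_i,n_i}}{i\in\Nat}$ (a \cen\ set, since $B\cup C$ is \cen), and define the $A$-hash
\[
h(\tau):=
\begin{cases}
b(n_i) & \textrm{if } \sigma_i\prec\tau\wedga\tau\in 2^{\tuple{\sigma_i,n_i}}\\[0.2cm]
\ \ \ 0 & \textrm{if } \sigma_i\not\prec\tau\wedga\tau\in 2^{\tuple{\sigma_i,n_i}}.
\end{cases}
\]
This $h$ is computable on its domain for exactly the reason it was in Lemma~\ref{SkANwtV64A}: any $\tau$ with $|\tau|\in A$ decodes (via the bijective pairing) to a unique pair $(\sigma,n)$ with $n\in B\cup C$, so $b(n)\de$ and its value is found by enumerating $B$ and $C$ until $n$ appears in one of them. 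The corresponding $h$-shuffle is $f(x)(i)=h(x\restr_{\tuple{\sigma_i,n_i}})\otimes x(\tuple{\sigma_i,n_i})$.

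For the sibling argument I would suppose $x,z$ are $f$-siblings and let $\sigma$ be the least prefix of $x$ that is not a prefix of $z$; since $x\neq z$, this $\sigma$ is computable from $x\oplus z$. Fix $n\in B\cup C$ and the coordinate $i$ with $(\sigma_i,n_i)=(\sigma,n)$. Because $|\sigma|<\tuple{\sigma,n}$ while $\sigma\prec x$ and $\sigma$ is not a prefix of $z$, the hash returns $b(n)$ at $x\restr_{\tuple{\sigma,n}}$ and $0$ at $z\restr_{\tuple{\sigma,n}}$; equating the $i$-th bits of $f(x)=f(z)$ then yields
\[
b(n)=x(\tuple{\sigma,n})\otimes z(\tuple{\sigma,n}).
\]
Thus $\psi(n):=x(\tuple{\sigma,n})\otimes z(\tuple{\sigma,n})$ is an $(x\oplus z)$-computable total boolean extension of $b$, and by the first paragraph the set $M=\sqbrad{n}{\psi(n)=1}$ is the desired $(B,C)$-separating set computable from the siblings.

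The construction is a direct adaptation, so I expect no serious obstacle; the only point requiring care is the same one as in Lemma~\ref{SkANwtV64A}, namely keeping $h$ total computable on its domain. This is guaranteed precisely because the enumeration is restricted to $\twomel\times(B\cup C)$, forcing $b(n_i)\de$, just as $n_i\in\zj$ forced $\varphi_{n_i}(n_i)\de$ there.
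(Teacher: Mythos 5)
Your proposal is correct and follows essentially the same route as the paper: the same enumeration of $\twomel\times(B\cup C)$, the same hash (your $b(n_i)$ is exactly the paper's $B(n_i)$, computable on $B\cup C$ by disjointness), and the same sibling argument via the least prefix $\sigma$ of $x$ not a prefix of $z$. Your framing through a total boolean extension of the partial selector $b$ is only a cosmetic repackaging of the paper's two implications, and in fact states the final separating set more carefully than the paper does (the paper's $M=\sqbrad{n}{x(n)\neq z(n)}$ should read $M=\sqbrad{n}{x(\tuple{\sigma,n})\neq z(\tuple{\sigma,n})}$, which is what you wrote).
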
\begin{proof}
Let $(\sigma_i, n_i)$ be an effective enumeration of $\twomel\times (B\cup C)$ without repetitions
and set
\[
A:=\sqbrad{\tuple{\sigma_i, n_i}}{i\in\Nat}.
\]
Consider the $A$-hash given by
\[
h(\tau):=
\begin{cases}
B(n_i)&\textrm{if $\sigma_i\preceq\tau\wedga \tau\in 2^{\tuple{\sigma_i, n_i}}$}\\[0.2cm]
\ \ 0&\textrm{if $\sigma_i\not\preceq\tau\wedga \tau\in 2^{\tuple{\sigma_i, n_i}}$}
\end{cases}
\]
so the  $h$-shuffle is given by 
$f(x)(i)=:h(x\restr_{\tuple{\sigma_i, n_i}})\otimes x(\tuple{\sigma_i, n_i})$.

Suppose that $x,z$ are $f$-siblings and let 
$\sigma$ be the least prefix of $x$ which is not a prefix of $z$. 
By the definition of $f$ and $f(x)=f(z)$ for all $n$ 
\begin{align*}
\parb{n\in B\cup C\wedga x(\tuple{\sigma,n})\neq z(\tuple{\sigma,n})} \implies & n\in B \\[0.2cm]
\parb{n\in B\cup C\wedga x(\tuple{\sigma,n})= z(\tuple{\sigma,n})} \implies & n\in C.
\end{align*}
So $M:=\sqbrad{n}{x(n)\neq z(n)}$ is  $(B,C)$-separating and $M\leqT x\oplus z$.
\end{proof}
We say that \ce sets $B, C$ are {\em computably inseparable} if there is no computable $(B,C)$-separating set.
By \cite[Theorem 11.2.5]{Odifreddi89} the sets 
\[
H_i:=\sqbrad{n}{\varphi_n(n)=i}, i<2 
\]
are computably inseparable.
Since every $(H_0, H_1)$-separating set is in $\DNC_2$, 
Lemma \ref{SkANwtV64Ab} gives an alternative proof of Theorem \ref{b92E3DHAD}. Let
\[
\textrm{$\Ss(B, C)$ denote the  class of $(B, C)$-separating sets}.
\]
Let $B, C$ be \ce computably inseparable sets so $\Ss(B, C)$ is a $\pz$ class.
By \cite[Proposition 111.6.2]{Odifreddi89} 
every \ce Turing  degree contains such $B, C$.

By \cite[Theorem 5.3]{Jockusch197201} $\Ss(B,C)$ is a negligible $\szth(B\cup C)$ class.

We say that $w$ is {\em weakly 2-random} relative to $A$ if it is not a member of any $\szth(A)$ null class.
For such reals $w$ we have $w\not\geqT A$.
\begin{thm}
For each noncomputable \ce set $A$ there exists a 
total computable random-preserving nowhere injective  $f:\twome\to\twome$ such that
\begin{enumerate}[(i)]
\item $f$ is a oneway collision-resistant surjection
\item $A$ computes an inversion of $f$ and a pair of $f$-siblings 
\end{enumerate}

and if $w$ is weakly 2-random relative to $A$ then 
\begin{itemize}
\item  $w$ does not compute any probabilistic inversion of $f$ 
\item  $w$ does not compute  any pair of $f$-siblings. 
\end{itemize}
\end{thm}\begin{proof}
By \cite[Proposition 111.6.2]{Odifreddi89}  there exist computably inseparable \ce $B, C$ with 
$A\equivT B\equivT C$.
Since $B, C$ are disjoint $A\equivT B\cup C$ so
\begin{equation}\label{C8FbRAV3pH}
\textrm{$\mathsf{S}(B,C)$ is a null $\szth(A)$ class.}
\end{equation}
Let $h$ be the $(B\cup C)$-hash of Lemma \ref{SkANwtV64Ab} 
and  $f$ be the $h$-shuffle. 

Let $w\not\geqT A$ be weakly 2-random relative to $A$ so $w\not\geqT A$.

By Corollary \ref{QZG9TROwmy}  $f$
is a random-preserving  nowhere injective surjection
and  there is no probabilistic inversion $g\leqT w$ of $f$.
By Lemma \ref{ty4nwYJGee} (i) there is an $A$-computable  inversion of $f$.

By \eqref{C8FbRAV3pH}, the choice of $h,w$ 
and Lemma \ref{SkANwtV64Ab} 
$w$ does not compute  any pair of $f$-siblings. In particular $f$ is oneway and collision resistant. 
\end{proof}
\begin{coro}
For each noncomputable \ce $A$ there exists a total computable nowhere injective surjection $f:\twome\to\twome$
such that
\begin{itemize}
\item  $f$ is oneway and collision-resistant relative to almost all oracles
\item  $f$ is not oneway and not collision-resistant relative to $A$.
\end{itemize}
\end{coro}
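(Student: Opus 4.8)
The plan is to reuse the function $f$ constructed in the preceding theorem, namely the $h$-shuffle for the $(B\cup C)$-hash of Lemma \ref{SkANwtV64Ab}, where $B,C$ are disjoint computably inseparable \ce sets with $A\equivT B\equivT C\equivT B\cup C$. By Corollary \ref{QZG9TROwmy} this $f$ is already a total computable, random-preserving, nowhere injective surjection, so the whole task reduces to repackaging the quantitative content of the preceding theorem as statements "relative to $A$" and "relative to almost all $w$". The two failures relative to $A$ are immediate from part (ii) of that theorem. Lemma \ref{ty4nwYJGee}(i) supplies an inversion $\ds\leqT A$ with $f(\ds(y))=y$ for all $y$, so $g(y\oplus r):=\ds(y)$ is an $A$-computable probabilistic inversion (the defining set has measure $1$), whence $f$ is not oneway relative to $A$. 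Moreover $A$ computes some sibling pair $(x_0,z_0)\in S_f$, so $A\oplus z$ computes $(x_0,z_0)$ for every $z$; thus $\{z: A\oplus z \text{ computes a member of } S_f\}=\twome$ is not null and $f$ is not collision-resistant relative to $A$.

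For the first bullet I would produce a single conull set of oracles on which $f$ is simultaneously oneway and collision-resistant, realised as the intersection of two conull sets. First, the reals that are weakly 2-random relative to $A$ are conull, since there are only countably many $\szth(A)$ classes and the union of the null ones is null; by the preceding theorem every such $w$ computes no probabilistic inversion of $f$, so $f$ is oneway relative to $w$. Second, I claim $S_f$ is $w$-negligible for almost all $w$. By Lemma \ref{SkANwtV64Ab} any oracle computing a member of $S_f$ computes a member of the separating class $\Ss(B,C)$, which is negligible by \cite[Theorem 5.3]{Jockusch197201}; hence $N:=\{u: u \text{ computes a member of } \Ss(B,C)\}$ is null. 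As $(w,z)\mapsto w\oplus z$ is measure-preserving, the preimage of $N$ in the product has measure $\mu(N)=0$, so by Fubini the section $\{z: w\oplus z\in N\}$ is null for almost all $w$; for such $w$ the smaller set $\{z: w\oplus z \text{ computes a member of } S_f\}$ is null too, i.e. $f$ is collision-resistant relative to $w$. Taking $w$ in the intersection of the two conull sets gives the first bullet.

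The one step that needs genuine care is this Fubini upgrade. The preceding theorem records only the pointwise fact that a weakly 2-random (relative to $A$) oracle computes no sibling pair, whereas $w$-negligibility of $S_f$ concerns the join $w\oplus z$ as $z$ ranges over $\twome$, so it cannot be read off from the pointwise statement; routing through the absolute negligibility of $\Ss(B,C)$ and Fubini is what avoids an awkward relativized van Lambalgen argument for weak 2-randomness. I would also state explicitly at the outset that "oneway and collision-resistant relative to almost all oracles" is to be read as: for almost every $w$, $f$ is oneway relative to $w$ and $S_f$ is $w$-negligible.
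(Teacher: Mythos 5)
Your proposal is correct and follows essentially the route the paper intends: the corollary is stated without a proof as an immediate consequence of the preceding theorem, and your derivation reuses exactly that theorem's function together with the paper's own supporting facts (Lemma \ref{ty4nwYJGee}(i) for the $A$-computable inversion, part (ii) of the theorem for the $A$-computable sibling pair, and the conull set of reals weakly 2-random relative to $A$). Your only addition is the Fubini/measure-preservation step upgrading the negligibility of $\Ss(B,C)$ to $w$-negligibility of $S_f$ for almost all $w$, which correctly fills the one detail the paper leaves implicit, namely the gap between ``$w$ computes no pair of $f$-siblings'' and ``$S_f$ is $w$-negligible''.
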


\section{Injective oneway functions}\label{952rgaHVWk}
Injective oneway  maps (in particular permutations) are well-studied  in computational complexity and cryptography 
\cite{onewayPermImp, onewaypermRothe}. It is therefore interesting to know if there are injective oneway real functions $f$.
It is not hard to show that such $f$ cannot be total computable \cite[Corollary 3.2]{onewayg24}.
We do not know if partial computable oneway injections exist. 
However we obtain an upper bound on their
strength: the  oracles that can probabilistically invert them. 

An interesting corollary is that, in general, it is easier to invert (even without random oracles) 
partial computable random-preserving injections than probabilistically invert
total oneway real functions. 

A {\em tree} $T$ is a $\preceq$-downward closed subset of $\twolo$. 
A real $x$ is a {\em path} of $T$ if $x\restr_n\in T$ for all $n$. 
Let $[T]$ be the class of all paths of $T$. 
Recall the notion of {\em representations} of functions from \S\ref{DZjVDbI3sf}.

\begin{lem}\label{injectiveOnClosedSet}
Suppose $f:\subseteq 2^\omega\to 2^\omega$ is partial computable and
\begin{itemize}
\item $P\leqT w$ is a tree with  $[P]\subseteq \mathsf{dom}(f)$
\item the restriction of $f$ to  $[P]\neq \emptyset$ is injective.
\end{itemize}
There is $g\leqT w$ with $g(f(x))=x$ for all $x\in [P]$. 
\end{lem}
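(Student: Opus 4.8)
The plan is to invert $f$ on $[P]$ by an effective compactness argument, exploiting that injectivity forces the preimage of $y:=f(x)$ inside $[P]$ to be a single point which gets pinned down, level by level, by finite approximations. Fix a computable representation $\hat f$ of $f$ (total and $\preceq$-preserving on $\twomel$, with $\hat f(\lambda)=\lambda$). Given a real $y$, call a node $\tau\in P$ \emph{$y$-good} if $\hat f(\tau)\preceq y$, and let $T_y$ be the set of $y$-good nodes. Since $\hat f$ is $\preceq$-preserving, $T_y$ is a subtree of $P$, and membership $\tau\in T_y$ is decidable from $w\oplus y$ uniformly in $\tau$: compute $P\cap 2^{|\tau|}$ from $w$, compute $\hat f(\tau)$, and read $|\hat f(\tau)|$ bits of $y$.

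First I would show that for $x\in[P]$ and $y=f(x)$ we have $[T_y]=\{x\}$. Along $x$ the values $\hat f(x\restr_\ell)$ increase to $f(x)=y$, so each $\hat f(x\restr_\ell)\preceq y$ and hence $x\restr_\ell\in T_y$ for all $\ell$; thus $x\in[T_y]$ and each $T_y\cap 2^\ell\neq\emptyset$. Conversely, if $x'\in[T_y]$ then $x'\in[P]\subseteq\dom(f)$, so $f(x')=\lim_\ell\hat f(x'\restr_\ell)$ is a real whose every approximation is a prefix of $y$; therefore $f(x')\preceq y$, and as both are reals $f(x')=y$. By injectivity of $f$ on $[P]$ this gives $x'=x$. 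The termination of the inversion rests on the consequence that for every $n$ there is a level $\ell_n$ with every $\tau\in T_y\cap 2^{\ell_n}$ extending $x\restr_n$: otherwise the $y$-good nodes that do not extend $x\restr_n$ form an infinite, finitely branching subtree, which by K\"onig's lemma has a path $x''\in[T_y]$ with $x''\restr_n\neq x\restr_n$, contradicting $[T_y]=\{x\}$.

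The algorithm for $g$ then follows. On oracle $w$ and input $y$, to produce $x\restr_n$, search through $\ell=0,1,2,\dots$, at each step computing the finite set $T_y\cap 2^\ell$ from $w\oplus y$ as above; halt at the first $\ell$ for which $T_y\cap 2^\ell$ is nonempty and all its members share a common prefix of length $n$, and output that prefix. By the previous paragraph this search halts for every $n$ when $y=f(x)$, and since $x\restr_\ell\in T_y\cap 2^\ell$ the common prefix must equal $x\restr_n$; the outputs for successive $n$ are therefore consistent and define $g(y)=x$. Each step queries $w$ and $y$ only finitely often, so $g\leqT w$, and $g(f(x))=x$ for all $x\in[P]$ as required. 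I expect the main obstacle to be the termination argument: turning the purely topological fact that a continuous injection on the compact set $[P]$ admits a continuous inverse into an \emph{effective} stopping rule, i.e.\ certifying $x\restr_n$ from finitely much of $y$, $w$, and $\hat f$ through the two-sided use of injectivity together with K\"onig's lemma.
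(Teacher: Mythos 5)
Your proof is correct and is essentially the paper's own argument: both fix a computable representation $\hat f$ of $f$, compute from $w\oplus y$ the finite sets of nodes of $P$ whose $\hat f$-images are compatible with $y$, output the common prefix of these candidate preimages, and use injectivity together with compactness (K\"onig's lemma) to show the candidates eventually agree on the unique preimage. The differences are only organizational: the paper calibrates levels $\ell_s$ at which every node of $P$ has $\hat f$-image longer than $s$ and derives termination by extracting two distinct paths $x_0\neq x_1$ with $f(x_0)=f(x_1)=y$, whereas you work with the full preimage tree $T_y$, first prove $[T_y]=\{x\}$, and then apply K\"onig's lemma to the subtree of nodes incompatible with $x\restr_n$.
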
\begin{proof}
Let $\hat f$ be a computable representation of $f$ with $\abs{\hat f(\sigma)}\leq |\sigma|$ and 
\[
P_s:=P\cap 2^{\ell_s}
\hspace{0.3cm}\textrm{where}\hspace{0.3cm}
\ell_s:=\min\sqbrad{t}{\forall \sigma\in P\cap 2^t,\ \abs{\hat f(\sigma)}>s}.
\]
Since $[P]\subseteq \dom(f)$, $P\leqT w$ the family $(P_s)$ is $w$-computable. Let
\[
B^\tau=\{\sigma\in P_{|\tau|}: \tau\preceq \hat{f}(\sigma)\}\subseteq P
\] 
and $\hat{g}(\tau)$ be the longest common prefix of the strings in $B^\tau$.

Since $(P_{|\tau|})$ is a $w$-computable family of finite sets:
\begin{itemize}
\item $\hat g\leqT w$ is a representation of some $g:\subseteq\twome\to\twome$
\item $g(y)\de\iff \lim_{\tau\prec y} \abs{\hat g(\tau)}=\infty$.
\end{itemize}
Assuming  $x\in [P], f(x)=y$ we show $g(y)=x$. If  $g(y)\de$ then 
\[
x \in \bigcap_{\tau\prec y} \dbra{B^{\tau}}\subseteq \bigcap_{\tau\prec y} \dbra{\hat g(\tau)}=\{g(y)\}
\]
so $g(y)=x$.
It remains to show that $g(y)\downarrow$. 

For a contradiction suppose that $\forall s,\ \hat{g}(y\upharpoonright_s)\preceq\sigma$ for some $\sigma$ so 
\[
\forall i<2\ \ \forall \tau\prec y:\ \dbra{B^{\tau}}\cap\llbracket\sigma i\rrbracket\neq\emptyset.
\] 
Since each $\dbra{B^{\tau}}$ is closed, by compactness there exist $x_0, x_1$ with 
\[
x_i\in\bigcap_{\tau\prec y} \parB{\dbra{B^{\tau}}\cap\llbracket\sigma i\rrbracket}\subseteq [P].
\] 
Since $ [P]\subseteq\dom(f)$ for each $i<2$ we get $f(x_i)\de$ and
\[
\forall \tau\prec y\ \ \forall \theta\prec x_i:\ 
\llbracket\hat{f}(\theta)\rrbracket\cap\llbracket \tau\rrbracket\neq\emptyset
\] 
which implies $f(x_i)=y$. Since
$x_i\in\llbracket\sigma i\rrbracket$ we have $x_0\neq x_1$ which contradicts 
the hypothesis that $f$ is injective on $[P]$. 
\end{proof}

A function $p:\Nat\to\Nat$ is {\em almost everywhere dominating (a.e.d.)} if it
dominates all $q:\Nat\to\Nat$, $q\leq z$ for almost all oracles $z$. By \cite{DobrinenSimpson04}
there exists such $p\leqT\zj$.
Oracles that compute an a.e.d.\ function are called {\em a.e.d.}

This notion can be characterized in terms of relative randomness.

Let $x\leqlr y$ denote that  every $y$-random is $x$-random.
\begin{lem}[\cite{MR2336587, solomonAED}]\label{Okf25tOoye}
The following are equivalent for each $x$:
\begin{enumerate}[(i)]
\item $x$ is \aed 
\item $\zj\leqlr x$  
\item  every positive $\pzt$  class has a positive $\pz(x)$ subclass. 
\end{enumerate}
\end{lem}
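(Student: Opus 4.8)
The plan is to establish the three-way equivalence through the two blocks (ii)$\iff$(iii) and (i)$\iff$(ii), the first being routine and the second carrying the real weight. First I would record two standard reductions. A class is $\pzt$ if and only if it is a $\pz(\zj)$ class (replace the $\zj$-computable tree by a $\Delta^0_2$ membership predicate), and dually a class is $\szt$ if and only if it is $\sz(\zj)$; under complementation a positive $\pzt$ class corresponds exactly to a $\szt$ class of measure $<1$, and a $\sz(x)$ class of measure $<1$ corresponds to a positive $\pz(x)$ class. Second, I would invoke the relativized form of Kjos-Hanssen's covering characterization of $\leqlr$: for all $A,B$ one has $A\leqlr B$ precisely when every $\sz(A)$ class of measure $<1$ is contained in some $\sz(B)$ class of measure $<1$. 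Specialized to $A=\zj$, $B=x$ this reads: $\zj\leqlr x$ iff every $\szt$ class of measure $<1$ lies inside a $\sz(x)$ class of measure $<1$.

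With these in hand the equivalence (ii)$\iff$(iii) is immediate by complementation. For (ii)$\Rightarrow$(iii), given a positive $\pzt$ class $Q$ its complement is a $\szt$ class of measure $<1$, so by the covering characterization it sits inside a $\sz(x)$ class $V$ of measure $<1$; then the complement of $V$ is a $\pz(x)$ class contained in $Q$ of measure $1-\mu(V)>0$, the desired positive $\pz(x)$ subclass. For (iii)$\Rightarrow$(ii), given a $\szt$ class $U$ of measure $<1$, its complement is a positive $\pzt$ class, which by (iii) contains a positive $\pz(x)$ subclass $P$; then the complement of $P$ is a $\sz(x)$ class of measure $<1$ containing $U$, and the covering characterization yields $\zj\leqlr x$.

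The remaining equivalence (i)$\iff$(ii) is the content of the cited theorems and is where the difficulty lies. For (ii)$\Rightarrow$(i) I would, from $\zj\leqlr x$, manufacture an $x$-computable function dominating, for almost every oracle $z$, every $z$-computable function; for (i)$\Rightarrow$(ii) I would use an \aedn function computed by $x$ to produce, for each $\szt$ class of measure $<1$, a $\sz(x)$ cover of measure $<1$. The obstacle in both directions is the same mismatch: the natural moduli attached to $\szt$ (equivalently $\sz(\zj)$) data are $\zj$-computable, yet an \aedn oracle need not compute $\zj$ at all, since being \aedn implies only highness and not $\geqT\zj$, so one cannot simply dominate these moduli. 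The bridge, which I would import from the cited work, is a measure-theoretic Fubini-style uniformization that trades the single $\zj$-computable modulus for a family of approximations succeeding $z$-computably on a set of oracles of measure approaching $1$; against such a family almost-everywhere domination is exactly what is needed. This transfer between the $\zj$-level and the almost-every-oracle level is the step I expect to be hardest, and it is precisely what \cite{MR2336587, solomonAED} supply.
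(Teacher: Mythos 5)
You should know at the outset that the paper does not prove this lemma at all: it is imported wholesale from the cited works \cite{MR2336587, solomonAED}, so the only fair comparison is against the literature. Your decomposition --- (ii)$\iff$(iii) by a covering argument, (i)$\iff$(ii) deferred --- matches how the result is actually distributed there ((ii)$\iff$(iii) is in essence Kjos-Hanssen's theorem in \cite{MR2336587}, while (i)$\Rightarrow$(ii) is the hard content of \cite{solomonAED}), and your statement of the covering characterization of $\leqlr$ (every $\sz(A)$ class of measure $<1$ is contained in a $\sz(B)$ class of measure $<1$ iff $A\leqlr B$) is correct. Deferring (i)$\iff$(ii) to the citations is therefore no worse than what the paper itself does.

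However, the ``standard reduction'' with which you open is false, and it breaks your direction (ii)$\Rightarrow$(iii). A $\pzj$ class is a closed subset of $\twome$, whereas a $\pzt$ class is an effective intersection of open sets and need not be closed: the class $\sqbrad{x}{\existsinf n\ x(n)=1}$ is $\pzt$, has measure one, and is not closed; more to the point, the classes to which the paper applies item (iii) in Theorem \ref{F3pmX3isl} are domains of partial computable functions, which are $\pzt$ and typically not closed. Only one inclusion holds: every $\pzj$ class is $\pzt$, and dually every $\sz(\zj)$ class is $\szt$. Consequently your (iii)$\Rightarrow$(ii) direction survives intact, since the complement of a $\sz(\zj)$ class of measure $<1$ is a positive $\pzj$ class and hence a positive $\pzt$ class. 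But in (ii)$\Rightarrow$(iii) the complement of a positive $\pzt$ class is a $\szt$ ($F_\sigma$) class, not an open $\sz(\zj)$ class, so the covering characterization --- which holds for $\sz(\zj)$ classes --- does not apply to it as you claim. The gap is repairable: write the complement as $\bigcup_n P_n$ with $(P_n)$ uniformly $\pz$, say $P_n=[T_n]$ for computable trees $T_n$; each $\mu(P_n)$ is the limit of the computable non-increasing sequence $s\mapsto \mu(\dbra{T_n\cap 2^s})$, so $\zj$ computes it to any precision and can find, given rational $\epsilon>0$, clopen $C_n\supseteq P_n$ with $\mu(C_n)\leq \mu(P_n)+\epsilon 2^{-n-1}$. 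Then $\bigcup_n C_n$ is a $\sz(\zj)$ class containing the complement, of measure at most $\mu\parb{\bigcup_n P_n}+\epsilon<1$ for small enough $\epsilon$, and your covering argument resumes from there. Without this outer-approximation step (or a version of the covering characterization established directly for $\szt$ classes, which is in effect what \cite{MR2336587} proves), your proof of (ii)$\Rightarrow$(iii) is incomplete.
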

Note that if $f:\subseteq\twome\to\twome$ is partial computable:
\begin{enumerate}[\hthree(a)]
\item the domain of $f$ is  a $\Pi^0_2$ class (\eg \cite[Proposition 2.1]{owrand24})
\item if $P\subseteq\dom(f)$ is $\pz(w)$ then $f(P)\in \pz(w)$ (\eg \cite[Proposition 2.2]{owrand24}).
\end{enumerate}
By \cite[Theorem 4.3]{milleryutran} if $x$ is $z$-random and $y\leq_T x$ is random then $y$ is $z$-random. 
So if $f$ is random-preserving it  preserves $z$-randomness for all $z$. 

Partial computable injections are not oneway relative to any $w\geqlr\zj$.
\begin{thm}\label{F3pmX3isl}
Let $f:\subseteq 2^\omega\to 2^\omega$ be a partial computable 
 injection and $w\geqlr \zj$ then $f$ is not oneway relative to $w$.
\end{thm}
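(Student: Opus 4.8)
The plan is to construct a probabilistic inversion $g\leqT w$ of $f$, which directly contradicts one-wayness relative to $w$. First I would dispose of the degenerate case: if $f$ is not random-preserving (in particular if $\mu(\dom(f))=0$), then $f$ is not oneway at all and there is nothing to prove. So I assume $f$ is random-preserving; then $\dom(f)$ is a positive class.

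The hypothesis $w\geqlr\zj$ is, by Lemma~\ref{Okf25tOoye}(iii), exactly the statement that every positive $\Pi^0_2$ class contains a positive $\Pi^0_1(w)$ subclass. Since $\dom(f)$ is a positive $\Pi^0_2$ class by fact~(a), I obtain a $w$-computable tree $P$ with $\emptyset\neq[P]\subseteq\dom(f)$ and $\mu([P])>0$. As $f$ is injective everywhere it is injective on $[P]$, so Lemma~\ref{injectiveOnClosedSet} supplies $g\leqT w$ with $g(f(x))=x$ for every $x\in[P]$. I let the candidate inversion ignore its random argument, $\tilde g(y\oplus r):=g(y)$; then $\tilde g\leqT w$ and $f(\tilde g(y\oplus r))=y$ whenever $y\in f([P])$.

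The crux is to show $\mu(f([P]))>0$, since the success set $\sqbrad{y\oplus r}{f(\tilde g(y\oplus r))=y}$ has measure $\mu\parb{\sqbrad{y}{f(g(y))=y}}\geq\mu(f([P]))$. Here I would combine three ingredients. By fact~(b), $f([P])$ is a $\Pi^0_1(w)$ class. Because $\mu([P])>0$, almost every real of $[P]$ is $w$-random; fix one such $x$. Being $w$-random it is random, $f(x)\leqT x$ is random by random-preservation, and the Miller--Yu--Tran fact then upgrades $f(x)$ to a $w$-random real. Thus $f([P])$ is a $\Pi^0_1(w)$ class containing a $w$-random real, and such a class cannot be null (a null $\Pi^0_1(w)$ class is covered by a $w$-Martin-L\"{o}f test and hence omits every $w$-random real). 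So $\mu(f([P]))>0$, the set $\sqbrad{y\oplus r}{f(\tilde g(y\oplus r))=y}$ is positive, and $\tilde g$ is a probabilistic inversion of $f$ computable from $w$; whence $f$ is not oneway relative to $w$.

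I expect the positivity $\mu(f([P]))>0$ to be the main obstacle, since $f$ need not interact with measure at all, and one cannot push $\mu([P])>0$ forward directly. The randomness-theoretic detour circumvents this by replacing the push-forward with the robust principle that a $\Pi^0_1(w)$ class meeting the $w$-random reals is non-null; it is precisely here that random-preservation and the availability of $\Pi^0_1(w)$ subclasses (the force of $w\geqlr\zj$) are jointly essential.
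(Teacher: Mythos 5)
Your proof is correct and follows essentially the same route as the paper's: a positive $\Pi^0_1(w)$ subclass of $\dom(f)$ via Lemma~\ref{Okf25tOoye}, the $w$-computable inversion from Lemma~\ref{injectiveOnClosedSet}, and positivity of $f([P])$ via random-preservation together with the Miller--Yu--Tran fact. The only cosmetic difference is that you work with $w$-random members of $[P]$ where the paper uses $w'$-random ones; both suffice since a $\Pi^0_1(w)$ class containing a $w$-random real is non-null.
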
\begin{proof}
Assuming that $f$ is random-preserving by the hypothesis:
\begin{itemize}
\item $\mathsf{dom}(f)$  is a positive $\pzt$ class by (a)
\item there is a positive $\Pi^0_1(w)$ class  $P\subseteq \mathsf{dom}(f)$ by Lemma \ref{Okf25tOoye}.
\item $f(P)\in \pz(w)$ by (b).
\end{itemize}
 By Lemma \ref{injectiveOnClosedSet} let $g\leqT w$ be such that  $\forall x\in P,\ g(f(x))=x$. Then
\begin{itemize}
\item  $P$   has a $w'$-random member $x$ because it is positive
\item $f(x)$ is $w'$-random because $f$ is random-preserving.
\end{itemize}
Since $f(P)$ is $\pz(w)$ class with a $w'$-random member:
\[
\mu(f(P))>0
\hspace{0.3cm}\textrm{and}\hspace{0.3cm}
\forall y\in f(P), \ f(g(y))=y
\]
so $f$ is not oneway relative to $w$.
\end{proof}

By \S\ref{DZjVDbI3sf} there is a total computable $f:\twome\to\twome$ which is oneway relative to 
any $w\not\geq_T\zj$. Conversely it is not hard to show that no total computable $f$ is oneway relative to $\zj$,
see \cite[Theorem 2.9]{owrand24}.  

On the other hand by \cite{GreenbergDomination} there are $w\not\geqT \zj$ with $w\geqlr\zj$.
So assuming randomness-preservation, by Theorem \ref{F3pmX3isl}  partial computable 
injections are in general easier to invert than total computable many-to-one functions.

%\bibliographystyle{abbrvnat}
%\bibliography{sibli}
\end{document}